\newtheorem{theorem}{Theorem}
\newtheorem{lemma}[theorem]{Lemma}
\newtheorem{prop}[theorem]{Proposition}
\newcounter{theremark}
\newenvironment{remark}{\medskip\parindent=0pt\textbf{Remark \arabic{theremark}\addtocounter{theremark}{1}.\ }\ignorespaces}{\medskip\par\normalsize\parindent=18pt}
\numberwithin{equation}{section}
\newcommand*\samethanks[1][\value{footnote}]{\footnotemark[#1]}
\renewcommand{\section}{\@startsection{section}{0}{0pt}{3.6ex plus 0.2ex minus 0.1ex}{2.3ex plus 0.1ex minus 0.1ex}{\center\normalfont\sc\large}}
\begin{document}

\title{The two-point correlation function of the fractional parts of $\sqrt{n}$ is Poisson\thanks{Research supported by ERC Advanced Grant HFAKT. J.M. is also supported by a Royal Society Wolfson Research Merit Award.}}
\author{Daniel El-Baz\thanks{School of Mathematics, University of Bristol, Bristol BS8 1TW, U.K.} \and Jens Marklof\samethanks \and Ilya Vinogradov\samethanks}
\date{\today}

\maketitle

\begin{abstract}
Elkies and McMullen [Duke Math.~J.~123 (2004) 95--139] have shown that the gaps between the fractional parts of $\sqrt n$ for $n=1,\ldots,N$, have a limit distribution as $N$ tends to infinity. The limit distribution is non-standard and differs distinctly from the exponential distribution expected for independent, uniformly distributed random variables on the unit interval. We complement this result by proving that the two-point correlation function of the above sequence converges to a limit, which in fact coincides with the answer for independent random variables. We also establish the convergence of moments for the probability of finding $r$ points in a randomly shifted interval of size $1/N$. The key ingredient in the proofs is a non-divergence estimate for translates of certain non-linear horocycles. 
\end{abstract}

\section{Introduction\label{sec:intro}}

It is well known that, for every fixed $0<\alpha<1$, the fractional parts of $n^\alpha$ ($n=1,\ldots,N$) are,  in the limit of large $N$, uniformly distributed mod 1.  Numerical experiments suggest that the gaps in this sequence converge to an exponential distribution as $N\to\infty$, which is the distribution of waiting times in a Poisson process, cf.~Fig.~\ref{fig1}. The only known exception is the case $\alpha=1/2$. Here Elkies and McMullen \cite{ElkiesMcM04} proved that the limiting gap distribution exists and is given by a piecewise analytic function with a power-law tail (Fig.~\ref{fig2}). In the present study we show that a closely related local statistics, the two-point correlation function, has a limit which in fact is consistent with the Poisson process, see Fig.~\ref{fig3}. The proof of this claim follows closely our discussion in \cite{EMV_directions_2013}, which produced an analogous result for the two-point statistics of directions in an affine lattice. (We note that Sinai \cite{Sinai13} has recently proposed an alternative approach to the statistics of $\sqrt n\bmod 1$, but will not exploit this here.)

\begin{figure}
\begin{centering}\includegraphics[width=0.9\textwidth]{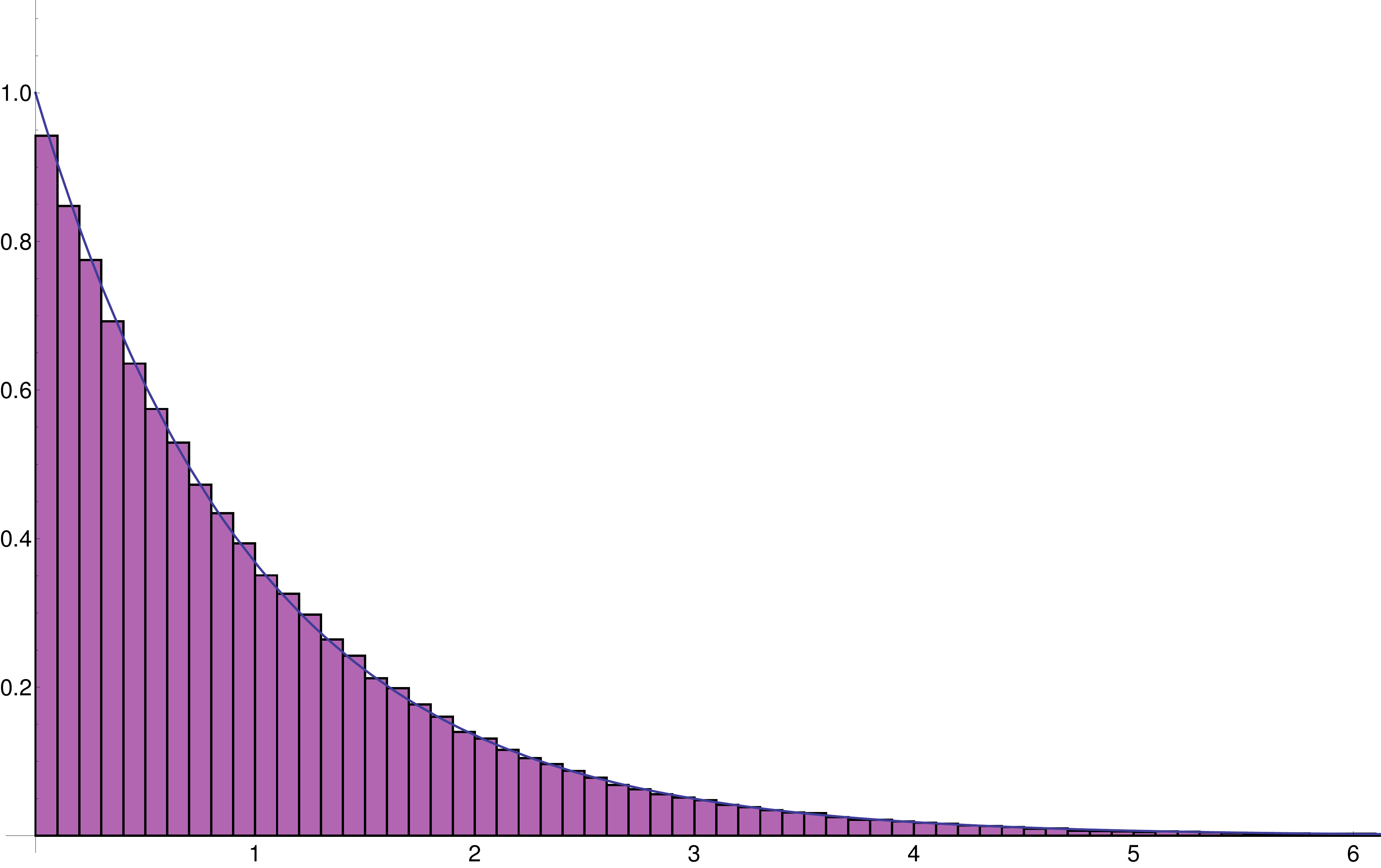}\par\end{centering}
\caption{Gap distribution of the fractional parts of $n^{1/3}$ with $n\leq 2\times 10^5$.  \label{fig1}}
\end{figure}
\begin{figure}
\begin{centering}\includegraphics[width=0.9\textwidth]{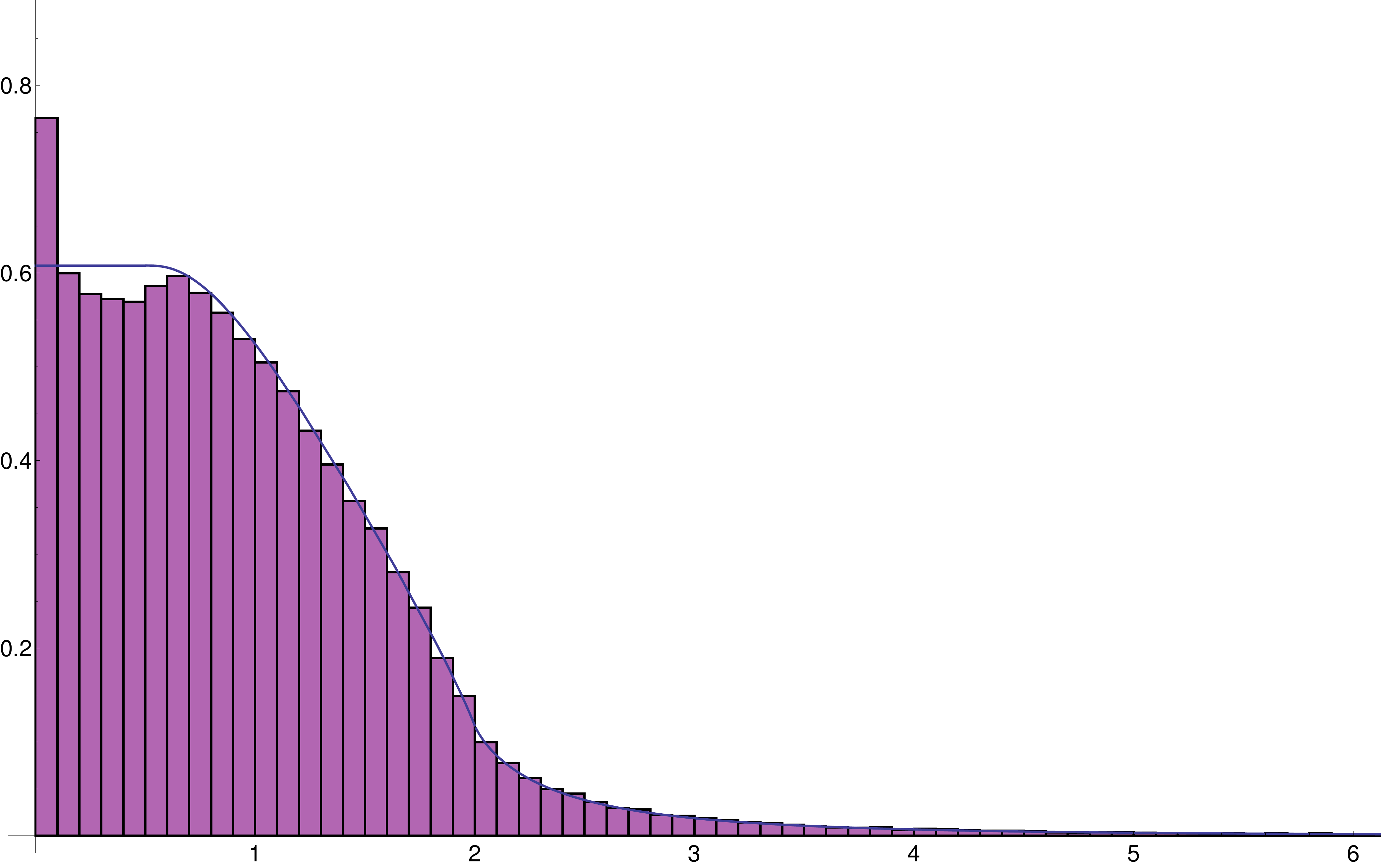}\par\end{centering}
\caption{Gap distribution of the fractional parts of $\sqrt n$ with $n\leq 2\times 10^5$.  \label{fig2}}
\end{figure}
\begin{figure}
\begin{centering}\includegraphics[width=0.9\textwidth]{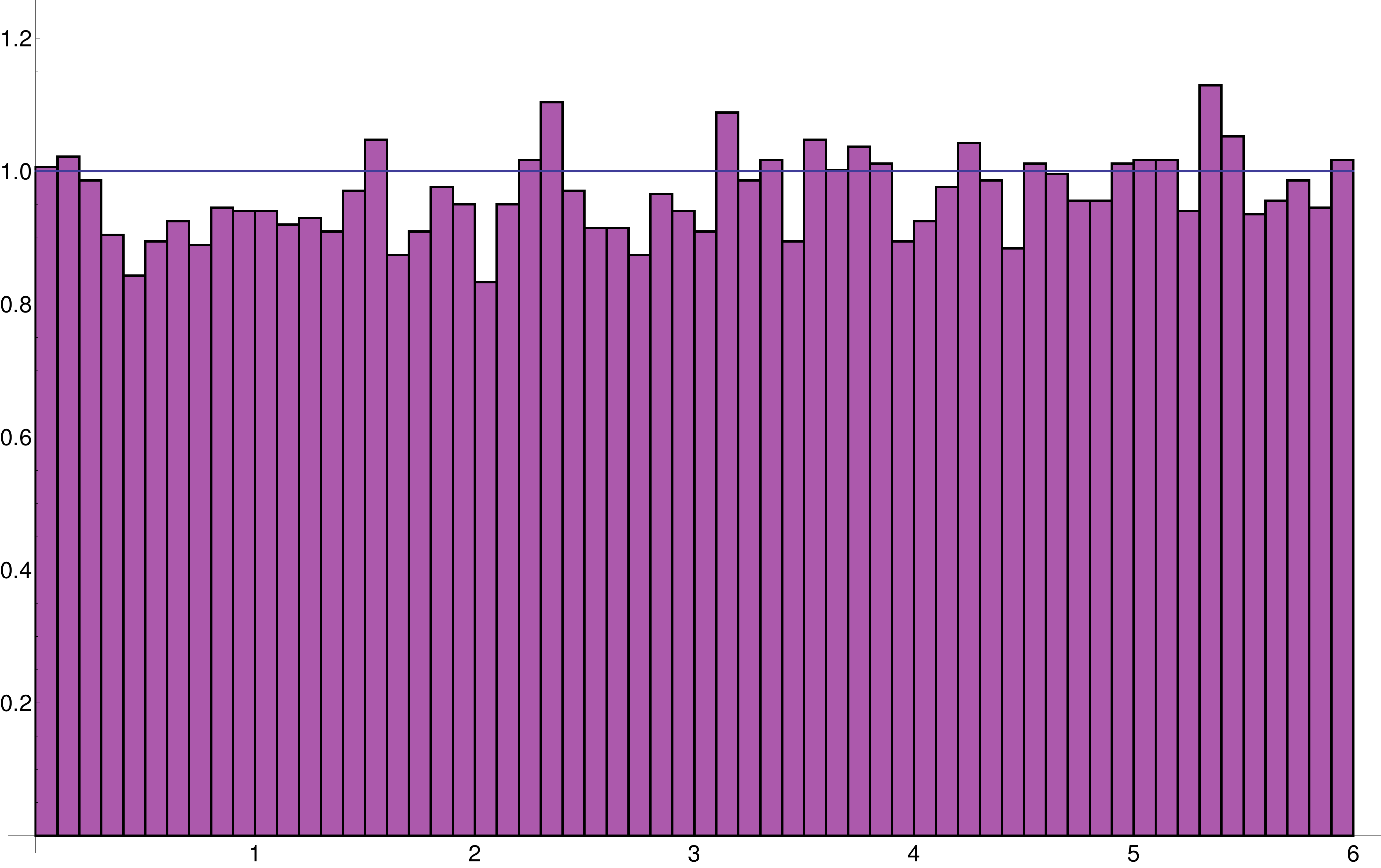}\par\end{centering}
\caption{Two-point correlations of the fractional parts of $\sqrt n$ with $n\leq 2000$, $n\notin\Box$.  \label{fig3}}
\end{figure}

Other number-theoretic sequences, whose two-point correlations are Poisson, include the values of positive definite quadratic forms subject to certain diophantine conditions \cite{sarnak_values_1997, Eskin05quadraticforms}, forms in more variables \cite{vanderkam_values_1999,vanderkam_pair_correlation_1999,VanderKam00}, inhomogeneous forms in two \cite{marklof_pair_correlation_2003,margulis_quantitative_2011} and more variables \cite{marklof_pair_correlation_II_2002}, and the fractional parts of $n^2\alpha$ (and higher polynomials) for almost all $\alpha$ \cite{rudnick_pair_1998,marklof_equidistribution_2003,heath-brown_pair_2010} (specific examples, e.g.~$\alpha=\sqrt 2$ are still open). 

To describe our results, let us first note that $\sqrt{n}=0\bmod 1$ if and only if $n$ is a perfect square. We will remove this trivial subsequence and consider the set
 \begin{equation}
 \scrP_T=\{\sqrt n \bmod 1\colon 1\le n\le T,\; n\notin\Box\}\subset \T:=\R/\Z
 \end{equation}
 where $\Box\subset\N$ denotes the set of perfect squares. The cardinality of $\scrP_T$ is $N(T)=T-\lfloor \sqrt T\rfloor$. We label the elements of $\scrP_T$ by $\alpha_1,\ldots,\alpha_{N(T)}$. The pair correlation density of the $\alpha_j$ is defined by
 \begin{equation}
R_N^2(f) = \frac{1}{N} \sum_{m\in\Z} \sum_{\substack{i,j=1\\ i\neq j}}^N f\big(N(\alpha_i-\alpha_j+m)\big),
\end{equation}
where $f\in C_0(\R)$ (continuous with compact support). Note that $R_N^2$ is not a probability density.
Our first result is the following.

\begin{theorem}\label{thm0}
For any $f\in C_0(\R)$,
\begin{equation}\label{paircon}
\lim_{T\to\infty}  R_{N(T)}^2(f) = \int_{\R} f(s)\, ds.
\end{equation}
\end{theorem}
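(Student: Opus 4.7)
The plan is to follow closely the strategy of \cite{EMV_directions_2013}, adapted to the non-linear sequence $\sqrt{n}\bmod 1$. The first step is to partition $\scrP_T$ by the integer part $m=\lfloor\sqrt{n}\rfloor$, writing $n=m^2+k$ with $1\le k\le 2m$. The Taylor expansion
\begin{equation*}
\sqrt{m^2+k}=m+\frac{k}{2m}-\frac{k^2}{8m^3}+O\!\left(\frac{k^3}{m^5}\right)
\end{equation*}
shows that $\sqrt{n}\bmod 1\approx k/(2m)\bmod 1$ with a correction of order $1/\sqrt T$ when $m\asymp\sqrt T$. Since this correction lies at the resolution scale $1/N\asymp 1/T$ over part of the range of $k$, it must be retained; it is exactly this nonlinearity that generates the non-Poisson gap statistics of Elkies--McMullen, and yet, as I expect to argue, it becomes inert at the coarser level of two-point correlations.

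The second step is to unfold
\begin{equation*}
R_{N(T)}^2(f)=\frac{1}{N(T)}\sum_{\substack{(m_1,k_1)\neq(m_2,k_2)}}\sum_{\ell\in\Z} f\!\bigl(N(T)\bigl(\sqrt{m_1^2+k_1}-\sqrt{m_2^2+k_2}+\ell\bigr)\bigr),
\end{equation*}
and to separate the \emph{diagonal} contribution $m_1=m_2$ (where the difference is $(k_1-k_2)/(2m)$ plus a cubic correction, yielding the expected local factor after a short Riemann-sum computation) from the \emph{off-diagonal} $m_1\ne m_2$. In the off-diagonal range the constraint $|\sqrt{n_1}-\sqrt{n_2}+\ell|\le c/N$ becomes, after Taylor expansion, an affine-linear system in $(k_1,k_2,\ell)$ with coefficients that depend smoothly on $(m_1,m_2)$; this aligns the problem with the counting problem treated in \cite{EMV_directions_2013}.

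The third, and technically central, step is to encode the resulting count as the integral of a suitable compactly supported test function against a curve (the ``non-linear horocycle'') translated in a homogeneous space $\Gamma\backslash G$ of the type used in \cite{ElkiesMcM04,EMV_directions_2013}, namely a space of affine two-dimensional unimodular lattices. The curve is parametrized by $m$ and is non-linear because of the square root. To pass to the limit I would invoke a Ratner-type equidistribution theorem together with a \emph{non-divergence estimate} preventing the translates from escaping the cusp of $\Gamma\backslash G$. This non-divergence is exactly the ingredient advertised by the abstract as the main technical input; it must be quantitative and uniform in $T$, and it has to handle the fact that the curve is not orbit of any one-parameter unipotent subgroup, so the classical Dani--Margulis machinery cannot be applied directly and must be adapted along the lines of the authors' earlier work.

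The fourth step is a Haar-measure computation: once the curve average is replaced by the integral over $\Gamma\backslash G$ of the relevant Siegel-type transform, a direct calculation (an affine analogue of Siegel's mean-value formula) identifies the limit with $\int_\R f(s)\,ds$, matching the Poisson prediction in \eqref{paircon}. The main obstacles I anticipate are: (a) establishing the non-divergence and equidistribution of the non-linear horocycle translates in step three, which is the technical heart of the argument; and (b) controlling the outer sum over $\ell\in\Z$, since $f$ is only compactly supported but the summation is infinite — this will require a truncation to $|\ell|\le C$ for the bulk, and an additional tail bound ruling out coincidences $\sqrt{n_1}-\sqrt{n_2}\approx\ell$ for large $\ell$, which itself is powered by the same non-divergence estimate.
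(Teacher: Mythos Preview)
Your outline diverges from the paper's actual route. The paper does \emph{not} attack $R_N^2(f)$ directly via a decomposition of the double sum. Instead it (i) proves convergence of the mixed moments $\M(T,\vecs)$ for $\re_+(s_1)+\cdots+\re_+(s_m)<3$ (Theorem~\ref{th:main0}), using Elkies--McMullen equidistribution for the restricted moments together with a non-divergence estimate for non-linear horocycle translates (Proposition~\ref{prop:ilya}) to control the tail; (ii) reads off the second moments of the limit process via the Siegel integral formula, giving \eqref{Ec1}--\eqref{Ec2}; and (iii) invokes the standard identity expressing $R_N^2$ for an indicator as a second factorial moment of $\scrN_T(I,\alpha)$ averaged over $\alpha$ (cf.~\cite{EMV_directions_2013}), so that moment convergence gives \eqref{paircon}. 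The non-divergence is used solely to show that no mass escapes in the second-moment integral, not to equidistribute a pair-counting function directly.

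Your step~2 also contains a concrete error. If $m_1=m_2=m$ then
\[
\alpha_i-\alpha_j=\frac{k_1-k_2}{\sqrt{m^2+k_1}+\sqrt{m^2+k_2}},\qquad |k_1-k_2|\le 2m-1,
\]
so $|\alpha_i-\alpha_j|\le 1-\tfrac{1}{2m}$. For $N(\alpha_i-\alpha_j+\ell)$ to land in the fixed support of $f$ one needs $|k_1-k_2+2m\ell|=O(m/N)=O(T^{-1/2})$, which forces $\ell=0$ and $k_1=k_2$; these are exactly the excluded terms $i=j$. Hence the ``diagonal'' $m_1=m_2$ contributes nothing for large $T$ and cannot supply ``the expected local factor after a short Riemann-sum computation''. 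All of the mass sits in $m_1\ne m_2$, where your Taylor-expanded constraint does not reduce in any transparent way to the affine-lattice counting of \cite{EMV_directions_2013}. This is precisely why the paper goes through $\scrN_T(I,\alpha)$: the lattice interpretation is already built into that function (Lemma~\ref{lem:the}), and squaring it and integrating over $\alpha$ produces the pair count without ever opening up the off-diagonal sum.
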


That is, $R_{N(T)}^2$ converges weakly to the two-point density of a Poisson process. 

Both the convergence of the gap distribution and of the two-point correlations follow from a more general statistics, the probability of finding $r$ elements $\alpha_j$ in randomly placed intervals of size proportional to $1/N(T)$. Given a bounded interval $I \subset \R$, define the subinterval $J=J_N(I,\alpha)=N^{-1} I +\alpha+\Z\subset\T$ of length $N^{-1}|I|$, and let
\begin{equation}
\scrN_{T}(I,\alpha) = \# \scrP_T\cap J_{N(T)}(I,\alpha) .
\end{equation}

It is proved in \cite{ElkiesMcM04} that, for $\alpha$ uniformly distributed in $\T$ with respect to Lebesgue measure $\lambda$, the random variable $\scrN_{T}(I,\alpha)$ has a limit distribution $E(k,I)$. That is, for every $k\in\Z_{\geq 0}$,
\begin{equation}\label{one}
\lim_{T\to\infty}\lambda(\{ \alpha\in\T : \scrN_{T}(I,\alpha) = k\}) = E(k,I).
\end{equation}
As Elkies and McMullen point out, these results hold in fact for several test intervals $I_1,\ldots, I_m$:

\begin{theorem}[Elkies and McMullen \cite{ElkiesMcM04}] \label{th:prelim}
Let $I=I_1\times\cdots \times I_m\subset \R^m$ be a bounded box. Then there is a probability distribution $E(\,\cdot\,,I)$ on $\Z_{\geq 0}^m$ such that, for any $\veck=(k_1,\ldots,k_m)\in\Z_{\geq 0}^m$
\begin{equation}
\lim_{T\to\infty}\lambda(\{ \alpha\in\T : \scrN_{T}(I_1,\alpha) = k_1,\ldots,\scrN_{T}(I_m,\alpha) = k_m\}) = E(\veck,I).
\end{equation}
\end{theorem}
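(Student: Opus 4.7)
The plan is to follow the strategy of Elkies and McMullen \cite{ElkiesMcM04}, recasting the counting problem as a lattice-point count in a fixed region of $\R^2$ and invoking equidistribution of a non-linear horocycle in a homogeneous space of affine unimodular lattices.

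First I would parametrise the sequence: writing each non-square $n \le T$ uniquely as $n = a^2 + b$ with $1 \le b \le 2a$, the expansion
\begin{equation*}
\sqrt{a^2 + b} - a = \frac{b}{2a} - \frac{b^2}{8a^3} + O\!\left(\frac{b^3}{a^5}\right)
\end{equation*}
turns the condition $\sqrt n \in \alpha + N(T)^{-1} I_j \pmod 1$ into the condition that a suitably rescaled pair $(\xi, \eta)$, an affine image of $(a,b)$ depending on $T$ and $\alpha$, lies in a fixed bounded region $\Omega_j \subset \R^2$ built only from $I_j$. Gathering the valid pairs $(a,b)$ then produces an affine unimodular lattice $\Lambda_{T,\alpha} \in X := \mathrm{ASL}(2,\Z) \backslash \mathrm{ASL}(2,\R)$ for which, up to an error that vanishes in the limit,
\begin{equation*}
\scrN_T(I_j, \alpha) = \#(\Lambda_{T,\alpha} \cap \Omega_j).
\end{equation*}
The curve $\alpha \mapsto \Lambda_{T,\alpha}$ is then an arc of a non-linear horocycle in $X$.

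The core of the argument is equidistribution: for every bounded continuous $F : X \to \R$,
\begin{equation*}
\int_\T F(\Lambda_{T,\alpha})\, d\alpha \longrightarrow \int_X F\, d\mu_{\mathrm{Haar}} \qquad (T \to \infty).
\end{equation*}
This is obtained by slicing the non-linear arc into short pieces on which, after rescaling, one reduces to equidistribution of long linear horocycles on $\SL(2,\Z) \backslash \SL(2,\R)$ (Ratner's theorem), augmented by an additional unipotent averaging to handle the affine translation. Applying the conclusion to the Haar-a.e.\ continuous function
\begin{equation*}
F_{\veck, I}(\Lambda) := \prod_{j=1}^m \mathbf 1_{\#(\Lambda \cap \Omega_j) = k_j}
\end{equation*}
and invoking Portmanteau gives
\begin{equation*}
\lambda\bigl(\{\alpha : \scrN_T(I_j, \alpha) = k_j\ \forall j\}\bigr) \longrightarrow \int_X F_{\veck, I}\, d\mu_{\mathrm{Haar}} =: E(\veck, I),
\end{equation*}
and summing the limits over $\veck$ recovers $1$, so $E(\cdot, I)$ is a probability distribution on $\Z_{\ge 0}^m$.

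The main obstacle is the passage from bounded continuous test functions to the unbounded indicator $F_{\veck, I}$: one needs a tightness statement asserting that the pushforward $(\Phi_T)_* \lambda$ under the map $\Phi_T : \alpha \mapsto \Lambda_{T,\alpha}$ does not concentrate near the cusp of $X$. The required non-divergence estimate for translates of non-linear horocycles, uniform in $T$, is precisely the technical contribution advertised in the abstract and, I expect, the main work required. The classical linear horocycle non-divergence of Dani and Margulis must be upgraded to the non-linear setting; this, together with careful bookkeeping when slicing the arc in the equidistribution step, is where most of the effort lies.
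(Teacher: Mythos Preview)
The paper does not give its own proof of this statement; it is quoted as a theorem of Elkies and McMullen and simply cited. Your outline of the Elkies--McMullen argument (parametrise $n=a^2+b$, rewrite $\scrN_T(I_j,\alpha)$ as a lattice-point count $\#(\Lambda_{T,\alpha}\cap\Omega_j)$ for an affine lattice moving along a non-linear horocycle in $\Gamma'\backslash G'$, and invoke equidistribution via Ratner) is in fact an accurate sketch of how \cite{ElkiesMcM04} proceeds.

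There is, however, a genuine misconception in your last paragraph. The test function $F_{\veck,I}(\Lambda)=\prod_{j}\mathbf 1_{\#(\Lambda\cap\Omega_j)=k_j}$ is an indicator and hence \emph{bounded} by $1$; it is not unbounded. The only issue in passing from bounded continuous test functions to $F_{\veck,I}$ is discontinuity, and this is handled by the standard Portmanteau argument: the discontinuity set of $F_{\veck,I}$ consists of affine lattices with a point on $\partial\Omega_j$ for some $j$, which has Haar measure zero. No non-divergence estimate is required for this step beyond what is already implicit in the equidistribution statement itself (weak convergence to a probability measure on $\Gamma'\backslash G'$). The non-divergence estimate advertised in the abstract is \emph{not} an ingredient in Theorem~\ref{th:prelim}; it is needed for Theorem~\ref{th:main0}, where the test function is $(\scrN_T(\overline I,\alpha)+1)^{\sigma}$, which is genuinely unbounded, and one must control escape of mass under an unbounded weight. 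So the ``main work required'' that you identify is the contribution of the present paper to the moment convergence, not to the Elkies--McMullen theorem you are asked about.
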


The limiting point process characterised by the probabilities $E(\veck,I)$ is the same as for the directions of affine lattice points with irrational shift \cite{marklof_strombergsson_free_path_length_2010,EMV_directions_2013}; in the notation of \cite{EMV_directions_2013}, $E(\veck,I)=E_0(\veck,I)=E_{0,\vecxi}(\veck,I)$ with $\vecxi\notin\Q^2$. This process is described in terms of a random variable in the space of random affine lattices, and is in particular not a Poisson process. The second moments and two-point correlation function however coincide with those of a Poisson process with intensity $1$. This is a consequence of the Siegel integral formula, see \cite{EMV_directions_2013}. Specifically, we have
\begin{equation}\label{Ec1}
\sum_{k=0}^\infty k^2 E(k,I_1) = |I_1|+|I_1|^2 
\end{equation}
and
\begin{equation}\label{Ec2}
\sum_{\veck\in\Z_{\geq 0}^2} k_1 k_2 E(\veck,I_1\times I_2) = |I_1\cap I_2|+|I_1|\,|I_2|. 
\end{equation}
The third and higher moments diverge. 

It is important to note that Elkies and McMullen considered the full sequence $\{ \sqrt{n}\bmod 1 : 1\leq n \leq T\}$. Removing the perfect squares $n\in\Box$ does not have any effect on the limit distribution in Theorem  \ref{th:prelim}, since the set of $\alpha$ for which  $\scrN_{T}(I,\alpha)$ is different has vanishing Lebesgue measure as $T\to\infty$. In the case of the second and higher moments, however, the removal of perfect squares will make a difference, and in particular avoid trivial divergences. 

The main result of the present paper is to establish the convergence to the finite mixed moments of the limiting process. The case of the second mixed moment implies, by a standard argument, the convergence of the two-point correlation function stated in Theorem \ref{thm0}, cf.~\cite{EMV_directions_2013}. 
For $I=I_1\times\cdots\times I_m\subset  \R^m$ and $\vecs=(s_1,\ldots,s_m)\in\C^m$ let
\begin{equation}
\M(T,\vecs):=\int_\T (\scrN_{T}(I_1,\alpha)+1)^{s_1} \cdots (\scrN_{T}(I_m,\alpha)+1)^{s_m} \, d\alpha.
\end{equation}

We denote the positive real part of $z\in\C$ by $\re_+(z):=\max\{ \re(z), 0 \}$.

\begin{theorem}\label{th:main0}
Let $I=I_1\times\cdots\times I_m\subset  \R^m$ be a bounded box, and $\lambda$ a Borel probability measure on $\T$ with continuous density. Choose  $\vecs=(s_1,\ldots,s_m)\in\C^m$, such that $\re_+(s_1)+\ldots+\re_+(s_m)<3$. Then,
\begin{equation}\label{limit:main}
\lim_{T\to\infty} \M(T,\vecs) = \sum_{\veck\in\Z_{\geq 0}^m} (k_1+1)^{s_1}\cdots (k_m+1)^{s_m} E(\veck,I).
\end{equation}
\end{theorem}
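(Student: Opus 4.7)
The plan is to follow the template of \cite{EMV_directions_2013}: recast the moment as an integral over the space of affine unimodular lattices, use the Ratner-theoretic equidistribution underlying Theorem \ref{th:prelim} to handle the bulk of the integrand, and dominate the cuspidal contribution uniformly in $T$ via a non-divergence estimate for the relevant curves.

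\emph{Dynamical reformulation.} Following Elkies and McMullen, one can write $\scrN_T(I_j,\alpha)=F_j(\Phi_T(\alpha))$, where $\Phi_T:\T\to X$ is a curve in the space of affine unimodular lattices $X=(\SL(2,\R)\ltimes\R^2)/(\SL(2,\Z)\ltimes\Z^2)$, and $F_j(\Lambda)=\#(\Lambda\cap\scrC_j)$ is the Siegel-type counting function associated to a fixed triangle $\scrC_j\subset\R^2$ determined by $I_j$. Theorem \ref{th:prelim}, together with a standard approximation argument to pass from Lebesgue measure to the continuous density $\lambda$, is equivalent to the weak-$\ast$ convergence of $(\Phi_T)_*\lambda$ to Haar measure on $X$, and hence to convergence in distribution of the vector $(\scrN_T(I_1,\alpha),\ldots,\scrN_T(I_m,\alpha))$ to the law $E(\cdot,I)$ under $\lambda$.

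\emph{Reduction to a uniform $L^p$ bound.} To upgrade this weak convergence to the limit \eqref{limit:main} of (possibly complex) moments, I would invoke the Vitali convergence theorem. Since $|(k+1)^s|\le (k+1)^{\re_+(s)}$, Hölder's inequality reduces uniform integrability of the product $\prod_j(\scrN_T(I_j,\alpha)+1)^{s_j}$ to the one-variable uniform bound
$$\sup_{T\ge 1}\int_\T F_j(\Phi_T(\alpha))^p\,d\lambda(\alpha)\le C(p,j),\qquad j=1,\ldots,m,$$
for every $p<3$; the threshold $3$ is precisely what makes the hypothesis $\re_+(s_1)+\cdots+\re_+(s_m)<3$ usable through a multi-Hölder decomposition. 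A Siegel-type mean-value computation on $X$ shows that $F_j\in L^p(X,\mu_X)$ exactly for $p<3$: indeed $F_j(\Lambda)\asymp y(\Lambda)^{-1}$ in the cusp, while Haar measure decays there as $y^{-2}\,dy$.

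\emph{The main obstacle.} The crucial point is to establish this $L^p$ bound along the single curve $\Phi_T$, uniformly in $T$, rather than merely on average against Haar measure. This amounts to showing that the pushforwards $(\Phi_T)_*\lambda$ place no more mass in cuspidal neighbourhoods of $X$ than Haar measure does, up to a constant independent of $T$. For a linear horocycle this would follow from the Dani--Margulis non-divergence theorem, but $\Phi_T$ is a \emph{non-linear} horocycle — after normalization it is a parabolic arc — so one must instead prove a quantitative non-divergence estimate for polynomial-like curves in $X$, in the Kleinbock--Margulis spirit and adapted to the $\SL(2,\R)\ltimes\R^2$ setting exactly as in \cite{EMV_directions_2013}. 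This non-divergence statement is the main technical input of the paper; once available it delivers the uniform $L^p$ bound, after which Vitali's theorem combined with Theorem \ref{th:prelim} immediately yields \eqref{limit:main}.
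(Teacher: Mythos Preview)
Your reduction matches the paper's: truncate, apply Theorem~\ref{th:prelim} to the bounded part, and control the tail by a non-divergence estimate for the curve $u\mapsto\tilde n(u)\Phi^t$ in $\Gamma'\backslash G'$. The Vitali/H\"older packaging is equivalent to the paper's inequality~\eqref{upperb} with $\overline I=\bigcup_j I_j$ and $\sigma=\sum_j\re_+(s_j)$.

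The gap is in the non-divergence step itself. You defer it to a Kleinbock--Margulis-type argument ``exactly as in \cite{EMV_directions_2013}'', but that is not what the paper does, and it is not clear such an argument would deliver the sharp threshold $\sigma<3$ rather than merely \emph{some} polynomial exponent. The paper proves the escape-of-mass bound (Proposition~\ref{prop:ilya}) by explicit arithmetic: after unfolding $F_{R,\beta}$ over $\Gamma_\infty'\backslash\Gamma'$, the critical range $1\le\beta<\tfrac32$ (i.e.\ $2\le\sigma<3$) reduces to Lemma~\ref{lem:daniel}, an estimate for $\sum_{c,d,m}f\!\big(T(\tfrac{d^2}{4c}+m)\big)$ handled via quadratic residues, the divisor bound, and classical Gauss sums. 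This input is specific to the parabolic shift $(u/2,u^2/4)$ carried by $\tilde n(u)$ and is genuinely new relative to \cite{EMV_directions_2013}, where the affine shift is a fixed irrational $\vecxi$; only the range $\beta<1$ transfers directly. Moreover, for $\beta\ge1$ the estimate in Proposition~\ref{prop:ilya} holds only for $u$ bounded away from $0$ by $\theta v^{\eta}$, and the excised neighbourhood is covered precisely by the removal of perfect squares (Lemma~\ref{lem:the}, which gives $\scrN_T(I,\alpha)=0$ for $|\alpha|\le\tfrac13T^{-1/2}$). Your uniform $L^p$ bound over all of $\T$ would fail without this, and your sketch does not mention it.
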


Our techniques permit to generalize the above results in two ways:

\begin{remark}
Instead of $\scrP_T$ we may consider
\begin{equation}
 \scrP_{T,c}=\{\sqrt n \bmod 1\colon c^2 T< n\le T,\; n\notin\Box\}
 \end{equation}
for any $0\leq c <1$. This setting is already discussed in \cite[Section 3.5]{ElkiesMcM04}, and the upper bounds obtained in the present paper are sufficient to establish Theorem \ref{th:main0} in this case. Note that the limit process is different for each $c$; it coincides with the limit process $E_c(\veck, I)$ studied in \cite{EMV_directions_2013}. As we point out in \cite{EMV_directions_2013}, the second moments of $E_c(\veck, I)$ are Poisson, and hence Theorem \ref{thm0} holds independently of the choice of $c$.
\end{remark}

\begin{remark}
Although Elkies and McMullen assume that $\lambda$ is Lebesgue measure, the equidistribution result that is used to prove Theorem \ref{th:prelim} in fact holds for any Borel probability measure $\lambda$ on $\T$ which is absolutely continuous with respect to Lebesgue measure. This follows from Ratner's theorem by arguments similar to those used by Shah \cite{shah_limit_1996}. It is important to note that the limiting process $E_c(\veck,I)$ will be independent of the choice of $\lambda$. Theorem \ref{th:main0} then follows from the general version of Theorem \ref{th:prelim} for measures $\lambda$ with continuous density (since in this case, for all upper bounds, it is sufficient to restrict the attention to Lebesgue measure). As discussed in \cite{EMV_directions_2013}, the generalization of the above results to $\lambda$ with continuous density yields the convergence of a more general two-point correlation function,
 \begin{equation}
R_N^2(f) = \frac{1}{N} \sum_{m\in\Z} \sum_{\substack{i,j=1\\ i\neq j}}^N f\big(\alpha_i, \alpha_j, N(\alpha_i-\alpha_j+m)\big),
\end{equation}
to the Poisson limit. That is, for all $f\in C_0(\T^2\times\R)$, 
\begin{equation}\label{paircon22}
\lim_{T\to\infty}  R_{N(T)}^2(f) = \int_{\mathclap{\T\times\R}} f(\alpha,\alpha,s)\,d\alpha\, ds.
\end{equation}
\end{remark}

\section{Strategy of proof}

The proof of Theorem \ref{th:main0} follows our strategy in \cite{EMV_directions_2013}. We define the restricted moments
\begin{equation}
\M^{(K)}(T,\vecs):=\int_{\max_j \scrN_{T}(I_j,\alpha)\leq K} (\scrN_{T}(I_1,\alpha)+1)^{s_1} \cdots (\scrN_{T}(I_m,\alpha)+1)^{s_m}  d\alpha .
\end{equation}
Theorem \ref{th:prelim} implies that, for any fixed $K\geq 0$,
\begin{equation}\label{limit:main2}
\lim_{T\to\infty} \M^{(K)}(T,\vecs) = \sum_{\substack{\veck\in\Z_{\geq 0}^m\\ |\veck|\leq K}} (k_1+1)^{s_1}\cdots (k_m+1)^{s_m} E(\veck,I),
\end{equation}
where $|\veck|$ denotes the maximum norm of $\veck$. To prove Theorem \ref{th:main0}, what remains is to show that 
\begin{equation}\label{limit:main3}
\adjustlimits\lim_{K\to\infty} \limsup_{T\to\infty} \left|\M(T,\vecs)-\M^{(K)}(T,\vecs)\right| = 0 .
\end{equation}
To establish the latter, we use the inequality
\begin{equation}
\left|\M(T,\vecs)-\M^{(K)}(T,\vecs)\right|
\leq \int_{\scrN_{T}(\overline I,\alpha)\geq K} (\scrN_{T}(\overline I,\alpha)+1)^{\sigma} d\alpha
\end{equation}
where $\overline I=\cup_j I_j$ and $\sigma=\sum_j \re_+(s_j)$. As in the work of Elkies and McMullen, the integral on the right hand side can be interpreted as an integral over a translate of a non-linear horocycle in the space of affine lattices. The main difference is that now the test function is unbounded, and we require an estimate that guarantees there is no escape of mass as long as $\sigma<3$. This means that \begin{equation}\label{upperb}
\adjustlimits\lim_{K\to\infty} \limsup_{T\to\infty} \int_{\scrN_{T}(\overline I,\alpha)\geq K} (\scrN_{T}(\overline I,\alpha)+1)^{\sigma} d\alpha =0
\end{equation}
implies Theorem \ref{th:main0}. The remainder of this paper is devoted to the proof of \eqref{upperb}.

\section{Escape of mass in the space of lattices\label{sec:affine}}

Let $G=\SL(2,\R)$ and $\Gamma=\SL(2,\Z)$. Define the semi-direct product $G'=G\ltimes \R^2$ by 
\beq (M,\vecxi)(M',\vecxi')=(MM',\vecxi M'+\vecxi'),\eeq
and let $\Gamma'=\Gamma\ltimes \Z^2$ denote the integer points of this group. 
In the following, we will embed $G$ in $G'$ via the homomorphism $M\mapsto (M,\vecnull)$ and identify $G$ with the corresponding subgroup in $G'$.
We will refer to the homogeneous space $\Gamma\quot G$ as the space of lattices and $\Gamma'\quot G'$ as the space of affine lattices. A natural action of $G'$ on $\R^2$ is defined by $\vecx\mapsto \vecx(M,\vecxi):=\vecx M +\vecxi$.

Given an interval $I\subset\R$, define the triangle
\begin{equation}
\fC(I) = \{ (x,y)\in\R^2 : 0< x<2, \; y\in 2 |x| I   \} .
\end{equation}
and set, for $g\in G'$ and any bounded subset $\fC\subset\R^2$,
\begin{equation}
\scrN(g,\fC)= \# ( \fC \cap \Z^2 g) .
\end{equation}
By construction, $\scrN(\,\cdot\,,\fC)$ is a function on the space of affine lattices, $\Gamma'\quot G'$.

Let 
\begin{equation}
\Phi^t =\begin{pmatrix} \e^{-t/2} & 0 \\ 0 & \e^{t/2} \end{pmatrix}, \qquad
\tilde n(u) = \bigg( \begin{pmatrix} 1 & u \\ 0 & 1 \end{pmatrix} , \bigg(\frac{u}{2}, \frac{u^2}{4}\bigg)\bigg).
\end{equation}
Note that $\{ \Phi^t \}_{t\in\R}$ and $\{ \tilde n(u) \}_{u\in\R}$ are one-parameter subgroups of $G'$. Note that $\Gamma' \tilde n(u+2) = \Gamma' \tilde n(u)$ and hence $\Gamma' \{ \tilde n(u) \}_{u\in[-1,1)} \Phi^t$ is a closed orbit in $\Gamma'\quot G'$ for every $t\in\R$.

\begin{lemma}\label{lem:the}
Given an interval $I\subset\R$, there is $T_0>0$ such that for all $T=\e^{t/2}\geq T_0$, $\alpha\in[-\frac12,\frac12]$:
\begin{equation}\label{crude}
\scrN_{T}(I,\alpha) \leq \scrN\left(\tilde n(2\alpha)\Phi^{t}, \fC(I) \right)+ \scrN\left(\tilde n(-2\alpha)\Phi^{t}, \fC(I) \right)
\end{equation}
and, for $-\frac13 T^{-1/2}\leq\alpha\leq \frac13 T^{-1/2}$,  
\begin{equation} \label{zerro}
\scrN_{T}(I,\alpha) = 0.
\end{equation}
\end{lemma}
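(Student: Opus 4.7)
The plan is to translate the arithmetic counting problem into a lattice-point count by direct unfolding of definitions, exploiting the quadratic identity $n_0 = (M + \alpha + s/N(T))^2$ which holds for every $n_0 \in \scrP_T$ whose $\sqrt{n_0} \bmod 1$ lies in $\alpha + N(T)^{-1} I$ (with $M \in \Z_{\geq 1}$ and $s \in I$).

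First, a direct computation in the semidirect product $G'$ gives, for any $(m,n) \in \Z^2$,
\[
(m,n)\tilde n(\pm 2\alpha)\Phi^{t} = \bigl((m \pm \alpha)/T,\; T(n \pm 2m\alpha + \alpha^2)\bigr).
\]
Reindexing by the integer $k = m^2 - n$ (a bijection on $\Z^2$ for each fixed $m$) puts this in the form $((m \pm \alpha)/T, T((m \pm \alpha)^2 - k))$, so that membership in $\fC(I)$ becomes a pair of explicit conditions: $m \pm \alpha \in (0, 2T)$ together with a window constraint relating $k$ to $(m\pm\alpha)^2$.

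Second, for each $n_0 \in \scrP_T$ contributing to $\scrN_T(I,\alpha)$, we use the quadratic identity to rewrite $(M \pm \alpha)^2 - n_0$ in terms of $s \in I$ and then exhibit an integer pair $(m,k)$ (with $m$ close to $M$ and $k$ close to $n_0$) producing a point in $\fC(I)$. The sign $\pm$ is forced by whether $s \geq 0$ or $s \leq 0$, with the integer $M$ possibly shifted by one accordingly — this is the source of the two terms on the right-hand side. The range $0 < (m \pm \alpha)/T < 2$ is immediate from $1 \leq M \leq \sqrt{T} + O(1)$, and the window condition on $k$ follows by unfolding the expression for $(M \pm \alpha)^2 - n_0$. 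The main obstacle is organizing this case analysis so that every $n_0$ is accounted for by at least one of the two lattices.

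For \eqref{zerro}: if $\sqrt{n_0} \bmod 1 \in \alpha + N(T)^{-1} I$ with $|\alpha| \leq \tfrac{1}{3} T^{-1/2}$, then there exists $m \in \Z$ with $|\sqrt{n_0} - m| \leq |\alpha| + \|I\|/N(T) \leq \tfrac{1}{3} T^{-1/2} + O(T^{-1})$. On the other hand, $n_0 \notin \Box$ forces $|n_0 - m^2| \geq 1$, so
\[
|\sqrt{n_0} - m| = \frac{|n_0 - m^2|}{\sqrt{n_0} + m} \geq \frac{1}{2\sqrt{T} + 1}.
\]
Since $\tfrac{1}{3} T^{-1/2} + O(T^{-1}) < (2\sqrt{T}+1)^{-1}$ for $T \geq T_0(I)$ (the slack is of order $T^{-1/2}/6$, which dominates the $O(T^{-1})$ term from $\|I\|/N(T)$), the two bounds are incompatible; thus no such $n_0$ exists, proving $\scrN_T(I,\alpha) = 0$.
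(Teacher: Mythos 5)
Your treatment of \eqref{zerro} is correct and is essentially the paper's own argument: for $n\notin\Box$ the distance from $\sqrt n$ to $\Z$ is at least of order $\tfrac12 T^{-1/2}$, which is incompatible with $\sqrt n \bmod 1$ lying in $\alpha+N(T)^{-1}I$ when $|\alpha|\le\tfrac13T^{-1/2}$ and $T$ is large. No issues there.

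For \eqref{crude}, however, there is a genuine gap. The paper does not prove this inequality at all --- it cites the more precise estimates of Elkies--McMullen and of \cite[Sect.~4]{marklof_survey} --- so you are attempting more than the paper does, but your attempt stops exactly at the step that carries all the content. Your computation of the lattice points, $(m,n)\tilde n(\pm2\alpha)\Phi^{t}=\bigl((m\pm\alpha)\e^{-t/2},\,\e^{t/2}((m\pm\alpha)^2-k)\bigr)$ with $k=m^2-n$, is correct, and the reduction of membership in $\fC(I)$ to the two conditions $0<(m\pm\alpha)\e^{-t/2}<2$ and $(m\pm\alpha)^2-k\in 2(m\pm\alpha)\e^{-t}I$ is the right framework. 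But the sentence ``the main obstacle is organizing this case analysis so that every $n_0$ is accounted for by at least one of the two lattices'' names the proof rather than giving it, and the obstacle is real: if $\sqrt{n_0}=M+\alpha+s/N$ with $s\in I$, then the natural candidate $(m,k)=(M,n_0)$ gives $(M+\alpha)^2-n_0=-\tfrac{2(M+\alpha)s}{N}\bigl(1+O((MN)^{-1})\bigr)$, so the window condition asks for $-s\cdot\tfrac{\e^{t}}{N}\bigl(1+o(1)\bigr)\in I$ --- note the sign flip and the ratio $\e^{t}/N$ --- and this is \emph{not} automatic from $s\in I$ for a general (non-symmetric) interval. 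Sorting this out requires (a) pinning down the relation between $T$, $N(T)$ and $\e^{t}$ (as literally stated with $T=\e^{t/2}$ the window $2(m\pm\alpha)\e^{-t}I$ has width $\asymp T^{-2}$, far smaller than $N(T)^{-1}$, and the inequality would fail; the intended normalization, consistent with the application $v=1/T$, $a(v)=\Phi^t$, is $\e^t=T$), (b) explaining precisely which contributing $n_0$ are captured by the $+2\alpha$ lattice and which by the $-2\alpha$ lattice and with which reflected or slightly enlarged copy of $I$, and (c) absorbing the $O((MN)^{-1})$ and $\tfrac{T}{N}=1+O(T^{-1/2})$ corrections into the triangle, which is where the hypothesis $T\ge T_0$ enters. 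None of this is done, so as it stands the proposal establishes the setup but not the inequality.
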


\begin{proof}
The bound \eqref{crude} follows from the more precise estimates in \cite{ElkiesMcM04}; cf.~also \cite[Sect.~4]{marklof_survey}. The second statement \eqref{zerro} follows from the observation that the distance of $\sqrt n$ to the nearest integer, with $n\leq T$ and not a perfect square, is at least $\frac12 (n+1)^{-1/2}\geq \frac12(T+1)^{-1/2}$.
\end{proof}

A convenient parametrization of $M\in G$ is given by the the Iwasawa decomposition
\begin{equation}\label{iwasawa}
M=n(u)a(v)k(\varphi)=\begin{pmatrix} 1 & u \\ 0 & 1 \end{pmatrix}
\begin{pmatrix} v^{1/2} & 0 \\ 0 & v^{-1/2} \end{pmatrix}
\begin{pmatrix} \cos\phi & -\sin\phi \\ \sin\phi & \cos\phi \end{pmatrix}
\end{equation}
where $\tau=u+\mathrm{i} v$ is in the complex upper half plane $\H=\{ u+\mathrm{i} v \in\C : v>0\}$ and $\phi\in[0,2\pi)$. 
A convenient parametrization of $g\in G'$ is then given by $\H\times [0,2\pi) \times \R^2$ via the decomposition
\begin{equation}\label{ida}
g = (1,\vecxi) n(u)a(v)k(\varphi)\eqqcolon(\tau,\phi;\vecxi).
\end{equation}
In these coordinates, left multiplication on $G$ becomes the group action
\begin{equation}
g\cdot (\tau,\varphi;\vecxi)=(g\tau,\varphi_g;\vecxi g^{-1})
\end{equation}
where for 
\begin{equation}
g =(1,\vecm)\abcd
\end{equation}
we have:
\begin{equation}
g\tau = u_g+\mathrm{i} v_g = \frac{a\tau+b}{c\tau+d}
\end{equation}
and thus
\begin{equation}
v_g=\im (g \tau)=\frac{v}{|c\tau+d|^2};
\end{equation}
furthermore
\begin{equation}
\varphi_g=\varphi+\arg(c\tau+d),
\end{equation}
and
\begin{equation}
\vecxi g^{-1} = (d\xi_1-c\xi_2,-b\xi_1+a\xi_2) - \vecm  .
\end{equation}

We define the abelian subgroups
\[\Gamma_\infty= \left\{\begin{pmatrix}1 &m\\ 0 &1\end{pmatrix}\colon m\in\Z\right\}\subset\Gamma  \]
and
\[\Gamma_\infty'= \left\{\left(\begin{pmatrix}1 &m_1\\ 0 &1\end{pmatrix}, (0,m_2) \right)\colon (m_1,m_2)\in\Z^2\right\}\subset\Gamma' . \]  
These subgroups are the stabilizers of the cusp at $\infty$ of $\Gamma\quot G$ and  $\Gamma'\quot G'$, respectively.

For a fixed real number $\beta$ and a continuous function $f:\R\to\R$ of rapid decay at $\pm\infty$, define the function $F_{R,\beta}\colon \H\times\R^2\to \R$ by 
\begin{equation}
\begin{split}
F_{R,\beta}\left(\tau; \vecxi\right)&=\sum_{\gamma\in\Gamma_\infty\quot \Gamma}\sum_{m\in\Z}f (((\vecxi\gamma^{-1})_1+m)v^{1/2}_\gamma) v^\beta_\gamma\chi_R(v_\gamma) \\
&=\sum_{\gamma\in\Gamma_\infty'\quot \Gamma'} f_\beta(\gamma g) ,
\end{split}
\end{equation}
where $f_\beta:G'\to\R$ is defined by 
\begin{equation}
f_\beta((1,\vecxi) n(u)a(v)k(\varphi)) := f(\xi_1 v^{1/2}) v^\beta \chi_R(v).
\end{equation}
We view $F_{R,\beta}\left(\tau; \vecxi\right)=F_{R,\beta}\left(g\right)$ as a function on $\Gamma'\quot G'$ via the identification \eqref{ida}. 

We show in \cite[Sect.~3]{EMV_directions_2013} that there is a choice of a continuous function $f\geq 0$ with compact support, such that for $\beta=\frac12 \sigma$, and $v\geq R$ with $R$ sufficiently large, we have
\begin{equation}\label{404}
\scrN(g,\fC(I))^\sigma \leq F_{R,\beta}(g)=F_{R,\beta}\left(\tau; \vecxi\right).
\end{equation}

The following proposition establishes under which conditions there is no escape of mass in the equidistribution of translates of non-linear horocycles. In view of Lemma \ref{lem:the} and \eqref{404}, it implies \eqref{upperb} and thus Theorem \ref{th:main0}. (Use $v=1/T$ and note that $\frac{\beta}{2(\beta-1)}>\frac12$ so the choice $\eta=\frac12$ is always permitted.)

\begin{prop}\label{prop:ilya}
Assume $f$ is continuous and has compact support. Let $0\leq \beta <\frac32$. 
Then
\beq 
\adjustlimits\lim_{R\to \infty}\limsup_{v\to 0} \bigg| \int F_{R,\beta}\left(\tilde n(u)a(v)\right) du \bigg|=0
\eeq
where the range of integration is $[-1,1]$ for $\beta<1$, and $[-1,-\theta v^{\eta}]\cup [\theta v^{\eta},1]$ for $\beta\geq 1$ and any $\eta \in [0,\frac{\beta}{2(\beta-1)})$, $\theta\in(0,1)$.
\end{prop}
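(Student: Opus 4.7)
The plan is to unfold $F_{R,\beta}$ along the bottom-row parametrisation of $\Gamma_\infty\backslash\Gamma$ by primitive pairs $(c,d)\in\Z^2$, and estimate $\int F_{R,\beta}(\tilde n(u)a(v))\,du$ term by term. Inserting the Iwasawa data of $\tilde n(u)a(v)$, namely $\tau=u+\i v$ and $\vecxi=(u/2,-u^2/4)$, one computes $v_\gamma=v/((cu+d)^2+c^2v^2)$ and $(\vecxi\gamma^{-1})_1=u(cu+2d)/4$. The cutoff $\chi_R(v_\gamma)$ kills the $c=0$ terms (for $v<R$), caps $|c|\le(Rv)^{-1/2}$, and localises $u$ in a window of width $\sqrt{v/R}/|c|$ centred at $-d/c$. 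After the substitutions $w=cu+d$ followed by $w=\sqrt v\,t$, the per-$(c,d)$ integral becomes
\[
\frac{\sqrt v}{|c|}\int_{t^2+c^2v\,\le\,1/R}\sum_{m\in\Z} f\Bigl(\bigl(\tfrac{vt^2-d^2}{4c}+m\bigr)(t^2+c^2v)^{-1/2}\Bigr)(t^2+c^2v)^{-\beta}\,dt.
\]

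A key observation is that, since $(t^2+c^2v)^{-1/2}\ge R^{1/2}\gg 1$ and $f$ is compactly supported, the $m$-sum is nonzero only when the fractional part of $d^2/(4c)$ lies within $O(R^{-1/2})$ of an integer. Among primitive pairs with $|d|\le|c|$, exact matching $4c\mid d^2$ combined with $\gcd(c,d)=1$ forces $(c,d)=(\pm 1,0)$; all remaining pairs contribute only through a Diophantinely sparse near-matching set of density $O(R^{-1/2})$. Combining this sparsity with the elementary estimate $\sum_m|f((y+m)r)|\le\|f\|_\infty$ (valid whenever $r\ge\diam\supp f$), the standard bound on $\int_{|t|\le 1/\sqrt R}(t^2+c^2v)^{-\beta}\,dt$, and the $d$-count $\varphi(c)=O(|c|)$, yields after summation over $|c|\le(Rv)^{-1/2}$ a total contribution of order $R^{\beta-3/2}$ — the threshold $\beta<3/2$ is exactly what makes this geometric $c$-sum converge. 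This already proves the proposition for $\beta<1$, where no excision in $u$ is needed.

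For $\beta\ge 1$ the remaining potential divergence comes entirely from the cosets $(c,d)=(\pm 1,0)$, whose centre $-d/c=0$ lies in $[-1,1]$; the excision $|u|\ge\theta v^\eta$ is designed precisely to remove the singular neighbourhood of $u=0$, while every other primitive pair has $|{-d/c}|\ge 1/|c|$ uniformly bounded away from $0$ in the relevant range of $c$ and so is unaffected by the excision. The excised $(c,d)=(\pm 1,0)$ contribution, after the substitution $u=vs$, reduces to (up to a factor $f(0)+o(1)$)
\[
v^{1-\beta}\int_{\theta v^{\eta-1}\,\le\,|s|\,\le\,1/\sqrt{Rv}}(s^2+1)^{-\beta}\,ds,
\]
which is a positive power of $v$ precisely under the hypothesis on $\eta$, and therefore vanishes as $v\to 0$. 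The main obstacle is the Diophantine bookkeeping in the threshold regime $\beta\to\tfrac{3}{2}^-$: one must show that the collective contribution of the primitive pairs $(c,d)\neq(\pm 1,0)$ for which $d^2/(4c)$ is near an integer vanishes uniformly in $R$ and $v$, and simultaneously that the excision does not inadvertently cut into the support windows of those pairs; a subsidiary technicality is that $f$ is only continuous, so one deliberately works throughout with the elementary $L^\infty$-bound on the $m$-sum rather than Poisson summation, since $\hat f$ need not decay.
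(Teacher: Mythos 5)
Your unfolding of $F_{R,\beta}$ along $\Gamma_\infty\backslash\Gamma$, the identification of $(c,d)=(\pm1,0)$ as the only singular coset, and the computation showing that the excision $|u|\ge\theta v^\eta$ kills its contribution precisely when $\eta<\frac{\beta}{2(\beta-1)}$ all match the paper's argument, and your exponent bookkeeping leading to $R^{\beta-3/2}$ is consistent with the paper's final dyadic sum $\sum_{j\gg\log R}2^{j(\beta-3/2)}$. However, there is a genuine gap at the heart of the argument: you assert that the primitive pairs $(c,d)$ with $\|d^2/(4c)\|\ll R^{-1/2}$ form a set of ``density $O(R^{-1/2})$'' among pairs with $c\sim D$, $|d|\le c$, and you give no proof. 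This equidistribution-at-scale-$\delta$ statement for the fractions $d^2/(4c)\bmod 1$ is exactly the content of the paper's Key Lemma (Lemma \ref{lem:daniel}), which is the hard analytic-number-theoretic core of the whole proposition; it is not an ``observation''. Worse, what you claim (count $O(\delta D^2)$ with $\delta=R^{-1/2}$) is \emph{stronger} than what the paper actually proves: counting solutions of $d^2\equiv j\bmod 4c$ gives at most $2^{\omega(4c)}\le\tau(4c)\ll c^{\eps}$ residues per class and hence only $O(\delta D^{2+\eps})$, while the sharp-in-$D$ bound $O(\delta D^2+D^{3/2}\delta^{-\eps})$ comes from Poisson summation and the evaluation of quadratic Gauss sums; the paper must take the minimum of the two to reach $S\ll D^2\delta^{1-\eps}$, and the strict inequality $\beta<\tfrac32$ is what absorbs the residual $\eps$.

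This matters because your declared strategy forecloses the needed tool. You insist on avoiding Poisson summation on the grounds that $f$ is merely continuous, so $\hat f$ need not decay; but then the only available count is the divisor-function one, $O(\delta D^{2+\eps})$, and since $D\sim 2^{-j}(v(t^2+1))^{-1/2}$ the extra factor $D^{\eps}$ produces a factor $v^{-\eps/2}$ that diverges as $v\to0$ for fixed $R$, destroying the $\limsup_{v\to0}$ step. The paper sidesteps the regularity issue simply by majorizing the continuous, compactly supported $f$ by a nonnegative Schwartz function before applying the Key Lemma. So the ``subsidiary technicality'' you mention at the end is in fact the decisive obstruction to your route, and the ``Diophantine bookkeeping'' you defer is the entire proof. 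To repair the proposal you would need to state and prove the analogue of Lemma \ref{lem:daniel} (dominating $f$ by a Schwartz function, splitting into the quadratic-residue bound and the Gauss-sum bound, and interpolating), at which point your argument becomes the paper's.
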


The proof of this proposition is organized in three parts: the proof for $\beta< 1$, a key lemma, and finally the proof for $1\le \beta< \frac32$. In the following we assume without loss of generality that $f$ is nonnegative,  even, and that $ f(rx) \le f(x)$ for all $r\ge 1$ and all $x\in\R$.

\section{Proof of Proposition \ref{prop:ilya} for $\beta<1$}

(This case is almost identical to the analogous result in \cite{EMV_directions_2013}.) Since $f$ is rapidly decaying and $R\geq 1$, we have
\begin{equation}
F_{R,\beta}\left(\tau; \vecxi\right) \ll_f \overline F_{R,\beta}\left(\tau\right)
\end{equation}
where
\begin{equation}
\overline F_{R,\beta}\left(\tau\right)=\sum_{\gamma\in\Gamma_\infty\quot \Gamma} v^\beta_\gamma\chi_R(v_\gamma).
\end{equation}
Thus
\begin{equation}
\int_{-1}^1 F_{R,\beta}\left(u+\mathrm{i} v;\vecxi \right) du \ll_{f,h}
2 \int_0^1 \overline F_{R,\beta}\left(u+\mathrm{i} v\right) du .
\end{equation}
The evaluation of the integral on the right hand side is well known from the theory of Eisenstein series.
We have
\begin{equation}
\overline F_{R,\beta}\left(\tau\right)=v^\beta \chi_R(v) +
2 \sum_{c=1}^\infty  \sum_{\substack{d=1 \\ \gcd(c,d)=1}}^{c-1} \sum_{m\in\Z} \frac{v^\beta}{c^{2\beta} |\tau+\frac{d}{c}+m|^{2\beta}} \chi_R\left(\frac{v}{c^2 |\tau+\frac{d}{c}+m|^2}\right).
\end{equation}
This function is evidently periodic in $u=\re\tau$ with period one, and its zeroth Fourier coefficient is
(we denote by $\varphi$ Euler's totient function)
\begin{equation}
\int_0^1  \overline F_{R,\beta}\left(u+\mathrm{i} v\right) du    =
v^\beta \chi_R(v) +
2 v^{1-\beta} \sum_{c=1}^\infty  \frac{\varphi(c)}{c^{2\beta}} \int_\R \frac{1}{(t^2+1)^\beta} \chi_R\left(\frac{1}{vc^2 (t^2+1)}\right)  dt.
\end{equation}
The first term vanishes for $v<R$, and the second term is bounded from above by 
\begin{equation}
2 v^{1-\beta} \sum_{c=1}^\infty  \frac{1}{c^{2\beta-1}} \int_\R \frac{1}{(t^2+1)^\beta} \chi_R\left(\frac{1}{vc^2 (t^2+1)}\right) \, dt = 2 v^{1/2} \sum_{c=1}^\infty K_R(cv^{1/2})
\end{equation}
with the function $K:\R_{>0}\to \R_{\geq 0}$ defined by
\begin{equation}
K_R(x)=  \frac{1}{x^{2\beta-1}} \int_\R \frac{1}{(t^2+1)^\beta} \chi_R\left(\frac{1}{x^2 (t^2+1)}\right) \, dt .
\end{equation}
We have $K_R(x)\ll \max\{ 1, x^{2\beta-1}\} \leq \max\{ 1, x^{-1/2}\}$ and furthermore $K_R(x)=0$ if $x>R^{-1/2}$. Thus
\begin{equation}
\lim_{v\to 0} v^{1/2} \sum_{c=1}^\infty K_R(cv^{1/2}) = \int_\R K_R(x) dx,
\end{equation}
which evaluates to a constant times $R^{-(1-\beta)}$. \qed

\section{Key lemma}

\begin{lemma}\label{lem:daniel} Let $f\in C(\R)$ be rapidly decreasing and let 
\beq S=\sum_{\substack{D \le c \le 2D \\ 1 \le d \le D \\ \gcd(c,d)=1}} \sum_{m \in \Z} f \left(T \left(\frac {d^2}{4c} + m \right)\right).\eeq
Then, for $D \ge 1, T > 1$ and any $\epsilon > 0$, we have 
\beq  S\ll  \frac {D^{2}}{T^{1-\eps}},   \label{eq:daniel}\eeq
where the implied constant depends only on $\epsilon$ and $f$.
\end{lemma}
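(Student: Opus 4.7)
The plan is to reduce the sum to a counting problem via the rapid decay of $f$, and then to bound the count via a standard divisor-type estimate for solutions of quadratic congruences. For any integer $N \ge 1$, rapid decay of $f$ gives the pointwise bound
\[
\bigg| \sum_{m \in \Z} f\bigl(T(d^2/(4c) + m)\bigr) \bigg| \ll_{f,N} \bigl(1 + T \|d^2/(4c)\|\bigr)^{-N},
\]
where $\|x\|$ denotes the distance from $x$ to the nearest integer. Only the summand with $m$ chosen to minimise $|d^2/(4c) + m|$ contributes above $O(T^{-N})$, and since $\|d^2/(4c)\| \le 1/2$ that tail is itself absorbed into the right-hand side.

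Next I would reduce to counting. Let $P(\delta)$ denote the number of admissible pairs $(c,d)$ with $\|d^2/(4c)\| \le \delta$. The condition is equivalent to $d^2 \equiv r \pmod{4c}$ for some integer $r$ with $|r| \le 4c\delta$. Crucially, for $c \ge 2$ and $\gcd(c,d) = 1$ the case $r = 0$ is impossible: $c \mid d^2$ together with $\gcd(c,d^2) = 1$ would force $c = 1$; hence $P(\delta) = 0$ for $\delta < 1/(8D)$. For $\delta \ge 1/(8D)$, the number of admissible nonzero $r$ is at most $16D\delta$; for each fixed $c \in [D,2D]$ and each $r \ne 0$, the number of $d \pmod{4c}$ solving $d^2 \equiv r \pmod{4c}$ is $\ll_\epsilon c^\epsilon$ (standard bound on square roots modulo composite moduli); and since $4c \ge 4D$, each residue class mod $4c$ meets $[1,D]$ in at most one point. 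Summing over $c$ yields
\[
P(\delta) \ll_\epsilon D^{2+\epsilon}\delta.
\]

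Finally, I would assemble via a dyadic decomposition of $\|d^2/(4c)\|$ into the bands $[2^j/T,2^{j+1}/T]$. Combining the pointwise decay with a layer-cake argument,
\[
S \ll_{f,\epsilon,N} \sum_{j \ge 0} P(2^{j+1}/T)\cdot 2^{-jN} + D^2 T^{-N} \ll_\epsilon D^{2+\epsilon}/T + D^2 T^{-N}.
\]
Choosing $N$ large handles the second term, and a relabelling of $\epsilon$ converts $D^{2+\epsilon}/T$ into $D^2/T^{1-\epsilon}$ once the $D^\epsilon$ factor is absorbed into $T^\epsilon$. The main obstacle is exactly this $D^\epsilon$ loss in the divisor bound on square roots modulo $4c$; in the intended application to Proposition \ref{prop:ilya}, the constraint $v_\gamma \ge R$ together with Lemma \ref{lem:the} forces $D \ll T^{1/2}$, so the loss is harmless. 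A fully uniform bound for all $D, T$ would require an additional input, e.g.\ completing the exponential sum in $d$ by Poisson summation and invoking Gauss-sum estimates.
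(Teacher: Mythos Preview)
Your counting argument is essentially the paper's first step: the paper, too, derives the bound
\[
S \ll_{\epsilon'} \frac{D^{2+\epsilon'}}{T}
\]
by reducing to quadratic residues modulo $4c$ and using $\#\{d\bmod 4c: d^2\equiv j\}\ll 2^{\omega(4c)}\ll c^{\epsilon'}$. So up to this point you and the paper agree.

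The gap is in your final paragraph. The sentence ``a relabelling of $\epsilon$ converts $D^{2+\epsilon}/T$ into $D^2/T^{1-\epsilon}$ once the $D^\epsilon$ factor is absorbed into $T^\epsilon$'' is simply false unless $D$ is bounded by a fixed power of $T$; the lemma is stated uniformly in $D\ge 1$, $T>1$, with the implied constant depending only on $\epsilon$ and $f$. Your fallback claim that in the application to Proposition~\ref{prop:ilya} one has $D\ll T^{1/2}$ is also incorrect. In that application (Section~7) the lemma is invoked with
\[
D \sim \frac{2^{-j}}{\sqrt{v(t^2+1)}},\qquad T=2^j,
\]
and the outer limit is $v\to 0$ with $R$ (hence $j$) held fixed before $R\to\infty$. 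At $t=0$ this gives $D\sim 2^{-j}v^{-1/2}$, which is unbounded relative to any power of $T=2^j$. Concretely, if one substitutes your bound $D^{2+\epsilon}/T$ into the final display of Section~7, the factor $D^\epsilon$ produces an extra $(v(t^2+1))^{-\epsilon/2}$; after the $t$-integration one is left with a quantity of size $v^{-\epsilon/2}R^{2\beta-3-\epsilon}$, which diverges as $v\to 0$. So the weak bound genuinely does not close the argument. (Lemma~\ref{lem:the} plays no role here; note also that the symbol $T$ in Lemma~\ref{lem:daniel} is unrelated to the $T$ of Lemma~\ref{lem:the}.)

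What the paper does, and what you correctly guessed but dismissed, is to supplement the divisor bound with a second estimate
\[
S \ll \frac{D^2}{T} + D^{3/2}T^{\epsilon''},
\]
obtained by Poisson summation in $m$ and evaluating the resulting Gauss sums $\sum_{d\bmod 4c} e(nd^2/4c)$. This bound is effective precisely when $D$ is large compared to $T$, where yours fails; taking the minimum of the two yields $D^2/T^{1-\epsilon}$ uniformly. You need to carry out that second half.
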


\begin{proof} 
We assume without loss of generality that $f$ is even, non-negative, and of Schwartz class.
We prove two statements about $S$ from which the statement of the Lemma will follow. They are 
\beq\label{eq:statement1}S\ll \frac{D^{2+\eps'}}{T} \qquad \text{for any $\epsilon'>0$} \eeq
and
\beq\label{eq:statement2}S\ll \frac{D^2}{T}+D^{3/2}T^{\eps''} \qquad \text{for any $\epsilon''>0$.} \eeq
Then $S$ is bounded by the smaller of these expressions, and it is easy to see that the bound in \eqref{eq:daniel} holds no matter which realizes the minimum. 

Equation \eqref{eq:statement1} is verified by summing over quadratic residues modulo $4c$. Note that the conditions $1 \le d \le c$ and $\gcd(c, d) = 1$ imply $\frac {d^2}{4c} \notin \Z$. 
For coprime $D \le c \le 2D$ and $1 \le d \le D$ and $m \in \Z$ such that $\left| \cramped{\frac {d^2}{4c} + m} \right| \ge \frac12$, we use rapid decay of $f$ to get 
\begin{align} 
\sum_{\substack{D \le c \le 2D \\ 1 \le d \le D \\ \gcd(c,d)=1}} 
\sum_{\substack{m \in \Z \\ \left| \cramped[\scriptstyle]{\frac {d^2}{4c} + m} \right| \ge \frac12}} f 
\left(T \left(\frac {d^2}{4c} + m \right)\right) 
&\ll \sum_{\substack{D \le c 
\le 2D \\ 1 \le d \le D}} \sum_{\substack{m \in \Z \\ \left| \frac {d^2}{4c} + 
m \right| \ge \frac12}} \frac 1{\left|T \left( \frac {d^2}{4c}+m \right)\right|^A} \\
&\ll \frac 1{T^A} \sum_{\substack{D \le c \le 2D \\ 1 \le d \le D}} \sum_{m \in \Z \setminus \{0\}} \frac 1{|m|^A} \\
&\ll \label{goodm}\frac {D^2}{T^A}
\end{align}
for every $A > 1$.

For a positive integer $n$, denote by $\omega(n)$ the number of distinct prime factors of $n$ and by $\tau(n)$ the number of divisors of $n$.

For coprime $D \le c \le 2D$ and $1 \le d \le D$ and $m \in \Z$ such that $\left| \cramped{\frac {d^2}{4c} + m }\right| < \frac12$, we have (denote by $\| \cdot \|$ the distance to the nearest integer)
\begin{align}
\sum_{\substack{D \le c \le 2D \\ 1 \le d \le D \\ \gcd(c,d)=1}} 
\sum_{\substack{m \in \Z \\ \left| \cramped[\scriptstyle]{ \frac {d^2}{4c} + m} \right| < \frac12}} f 
\left(T \left(\frac {d^2}{4c} + m \right)\right) 
& \le  \sum_{\substack{D \le c \le 2D \\ 1 \le d \le D \\ \gcd(c,d)=1}} f \left(T \left\| \frac {d^2}{4c}  \right\| \right) \\
& \ll  \sum_{\substack{D \le c \le 2D \\ 1 \le j \le 2c \\ j \text{ is a square mod }4c}} 2^{\omega(4c)} f\left(T \frac j{4c} \right),
\end{align}
since \beq\#\{ d \bmod 4c \, : \, d^2 \equiv j \bmod {4c},\; \gcd(c,d)=1 \} \ll 2^{\omega(4c)}\eeq for every $j$.
Now $2^{\omega(4c)}$ is the number of squarefree divisors of $4c$ and is therefore at most $\tau(4c)$. Combined with rapid decay (we use $f(t) \ll \frac 1t$), this yields
\beq \ll \frac 1T \sum_{\substack{D \le c \le 2D \\ 1 \le j \le 4D}} \tau(4c) \frac cj.\eeq
Finally the fact that for every $\epsilon''' > 0, \, \tau(n) \ll n^{\epsilon'''}$ gives
\beq \ll \frac {D^{2+\epsilon'''} \log D}T \eeq 
which is 
\beq\label{badm} \ll \frac {D^{2+\epsilon'}}T \eeq for every $\epsilon' > 0$. It suffices to note that $(\ref{goodm}) \ll (\ref{badm})$ to verify \eqref{eq:statement1}.

Inequality \eqref{eq:statement2} is obtained as follows. The Poisson summation formula yields 
\beq S \ll c_0D^2 + \bigg| \sum_{\substack{n \ne 0 \\ D \le c \le 2D \\ 1 \le d \le 4c}} c_n e\left(\frac{ n d^2}{4c}\right) \bigg|.\label{eq:poisson}\eeq
Here \beq c_n = \frac 1T \hat{f} \left( \frac nT \right) \eeq where $\hat{f}$ is the Fourier transform of $f$, which is also of Schwartz class, and $e(z)=e^{2\pi \mathrm{i} z}$ is the usual shorthand. The second term in \eqref{eq:poisson} is bounded by
\begin{align}
\bigg|\sum_{\substack{n \ne 0 \\ D \le c \le 2D \\ 1 \le d \le 4c}} c_n e\left(\frac{ n d^2}{4c}\right) \bigg|
&\le \sum_{r=1}^{8D} \sum_{n\ne 0}\sum_{\substack{{4D/r \le c \le 8D/r}\\{ \gcd(n,4c)=1}}} \bigg| c_{nr} r \sum_{d\bmod{4c}}e\left(\frac{nd^2}{4c} \right) \bigg|\\
&\le \sum_{n \ne 0} \sum_{1 \le r \le 8D} \sum_{4D/r \le c \le 8D/r} |c_{nr}| r\sqrt{8c} . \label{Gauss} 
\end{align} 
The last inequality follows from the well known evaluation the classical Gauss sum with $\gcd(n,4c)=1$
\beq \label{Gaussvalue} \sum_{d \bmod {4c}}  e \left( \frac {nd^2}{4c} \right) = (1+\mathrm{i})\; \epsilon_n^{-1} \left( \frac {4c}{n} \right) \sqrt{4c},  \eeq
where $\left( \frac {4c}n \right)$ is the Jacobi symbol and $\epsilon_n = 1$ or $\mathrm{i}$ if $n=1$ or $3\bmod 4$, respectively.

When $|nr| < T$ we use the fact that the Fourier transform of $f$ is bounded:
\beq |c_{nr}| \ll \frac 1T. \eeq
Therefore,
\beq \eqref{Gauss} \ll \frac {D^{3/2}}T \sum_{\substack{n \ne 0 \\ 1 \le r \le 8D \\ |nr| < T}} \frac 1{\sqrt{r}} = \frac {D^{3/2}}T \sum_{1 \le r \le 8D} \frac 1{\sqrt{r}} \sum_{|n| < T/r} 1 \ll D^{3/2} \eeq 

When $|nr| > T$, we use the fact that the Fourier transform of $f$ decays faster than any polynomial since $f$ is smooth:
\beq |c_{nr}| \ll \frac 1{|n|^A r^A} T^{A-1}. \eeq 
We take $A>1$. 
Then we have 
\begin{align}
\eqref{Gauss} & \ll D^{3/2}T^{A-1}\sum_{\substack{|nr|>T\\n\ne0\\1\le r\le 8D}}\frac{1}{|n|^Ar^{A+1/2}} \ll D^{3/2}T^{A-1}\sum_{\substack{nr>T\\ n,r\ge 1}}\frac1{(nr)^A}
\\
&\ll D^{3/2}T^{A-1}\sum_{k=T}^\infty k^{-A+\eps''}\ll D^{3/2}T^{\eps''}.
\end{align}
This proves \eqref{eq:statement2} and the Lemma.

%
%
\end{proof}

\section{Proof of Proposition \ref{prop:ilya} for $\beta\geq 1$}

We have
\begin{equation}
\tilde n(u)a(v) = (u+\mathrm{i} v, 0 ; \vecxi) 
\end{equation}
where $\vecxi=(u/2,-u^2/4)$. For this choice we have
\begin{align}\label{eq:firstterm}
F_{R,\beta}(\tau; \vecxi)&=
2\sum_{m\in\Z}f\left((m+u^2/4) \frac{v^{1/2}}{|\tau|}\right)\frac{v^\beta}{|\tau|^{2\beta}}\chi_R\left(\frac{v}{|\tau|^2}\right)+\\
&+2\sum_{\substack{{(c,d)\in\Z^2}\\{\gcd(c,d)=1}\\{c>0,d\ne 0}}}\sum_{m\in\Z}f\left((cu^2/4+du/2+m)\frac{v^{1/2}}{|c\tau+d|}\right)
\frac{v^{\beta}}{|c\tau+d|^{2\beta}}\chi_R\left(\frac v{|c\tau+d|^2}\right).\label{eq:secondterm}
\end{align}
The integral of the first term 
tends to zero as $v\to0$. We write $T=\frac{v^{1/2}}{|\tau|}=\frac{v^{1/2}}{\sqrt{u^2+v^2}}.$ Indeed, for $m\neq 0$ we have $|f((m+u^2/4)T)|\ll (|m|T)^{-A}$ from rapid decay, so that 
\beq
\int_J \eqref{eq:firstterm}du\ll \int_{-1}^1 T^{-A+\beta} du\ll  \int_{-1}^1 \left(\frac{v^{1/2}}v\right)^{-A+\beta} du \ll v^{\frac{A-\beta}2 },
\eeq
where $J=[-1,-\theta v^{\eta}]\cup [\theta v^{\eta},1]$.
If $A>\beta$, then this contribution is negligible as $v\to 0$. For $m=0$, we have $|f|\ll 1$ so that the contribution of this term is, assuming $\beta>1$,
\beq
\ll \int_J  T^{\beta}du \ll \int_{\theta v^{\eta}}^1 u^{-\beta}   v^{\beta/2}du = \frac{\theta^{1-\beta}}{\beta-1} \, v^{\beta/2+\eta-\beta \eta} \to 0
\eeq
since $\eta<\frac{\beta}{2(\beta-1)}$; similarly for $\beta=1$.

It remains to analyze the contribution of \eqref{eq:secondterm}. 
Notice that the this term is nonzero only when $-d/c$ is in the range of integration for $u$, which is contained in the interval $[-1,1]$. Therefore we restrict the summation to $0<|d|\le c$.
Now we perform the substitution $t=\left(u+d/c\right)v^{-1}$ to ``zoom in'' on each rational point and extend the range of integration to all of \R. This gives 
\begin{multline}\int_{t=-\infty}^\infty f\left(\left(\tfrac c4\left(-\tfrac dc +tv\right)^2+\tfrac d2\left(-\tfrac dc+tv\right)+m \right)\frac1{\sqrt{c^2 v (t^2+1)}}\right)
\times
\\
\times
\frac v{(c^2v(t^2+1))^\beta}\chi_R\left(\frac1{c^2v(t^2+1)}\right)  dt\end{multline}
and we need to bound 
\beq\sum_{c=1}^\infty\int_{t\in\R}\sum_{\substack{{0<|d|\le c}\\{(c,d)=1}}}\sum_{m\in\Z} f\!\left(\left(-\tfrac {d^2}{4c}+m + O(ctv)\right)\!\frac1{\sqrt{c^2 v (t^2+1)}}\right) \frac  {v\, dt}{(c^2v(t^2+1))^\beta}\chi_R\!\left(\frac1{c^2v(t^2+1)}\right)\!.\label{eq:tobound}\eeq
Now we decompose the region $\dfrac 1{\sqrt{c^2v(t^2+1)}} \ge \sqrt{R}$ into dyadic regions \[2^j \le \dfrac 1{\sqrt{c^2 v (t^2+1)}} < 2^{j+1}\] for $j \gg \log R$.
We can thus bound \eqref{eq:tobound} by 
\begin{multline*}\sum_{j \gg \log R} \sum_{c \ge 1} \int_\R \sum_{\substack{{0<|d|\le c}\\{(c,d)=1}}}\sum_{m\in\Z} f\left(\left(-\tfrac {d^2}{4c}+m + O(ctv)\right)\frac1{\sqrt{c^2 v (t^2+1)}}\right) \times \\\times \frac  v{(c^2v(t^2+1))^\beta}\chi_{[2^j, 2^{j+1})} \left(\frac1{\sqrt{c^2v(t^2+1)}}\right)  dt \end{multline*}
\begin{multline*} 
\le\sum_{j \gg \log R} \sum_{c \ge 1} \int_\R \sum_{\substack{{0<|d|\le c}\\{(c,d)=1}}}\sum_{m\in\Z} f\left(2^j\left(-\tfrac {d^2}{4c}+m + O(ctv)\right)\right)\times\\\times \frac  v{(c^2v(t^2+1))^\beta}\chi_{[2^j, 2^{j+1})} \left(\frac1{\sqrt{c^2v(t^2+1)}}\right) dt 
\end{multline*}
\begin{align}
\label{eq:doublesum}&\ll v \sum_{j \gg \log R} 2^{2 \beta j} \int_\R   \sum_{\frac {2^{-(j+1)}}{\sqrt{v(t^2+1)}} \le c \le \frac {2^{-j}}{\sqrt{v(t^2+1)}}} \sum_{\substack{{0<|d| \le \frac {2^{-j}}{\sqrt{v(t^2+1)}}}\\{(c,d)=1}}} \sum_{m\in\Z}  f\left(2^j\left(-\tfrac {d^2}{4c}+m + O(ctv)\right)\right) dt .\end{align}

It remains to remove the error term $O(ctv)$ from the argument of $f$ to apply Lemma \ref{lem:daniel}. We have that  $2^j ctv\ll v^{1/2}$ for every $j$. Define 
\[f^*(x)=\max_{-1\le y\le 1} f(x+y).\] 
Then, $f^*(x)\ge f(x+O(v^{1/2}))$ for $v$ small enough, and we can bound \eqref{eq:doublesum} by a similar expression with $ f^*\left(2^j\left(-\tfrac {d^2}{4c}+m \right)\right) $ in place of $ f\left(2^j\left(-\tfrac {d^2}{4c}+m + O(ctv)\right)\right) $. To bound this we apply Lemma \ref{lem:daniel} with $D \sim \frac {2^{-(j+1)}}{\sqrt{v(t^2+1)}}$, $T = 2^j$, and $\eps=\frac32-\beta>0.$ Then we have
\begin{align}\eqref{eq:doublesum}& \ll v\sum_{j\gg \log R} 2^{2\beta j}\int_{t\in \R}\frac{D^2}{T^{1-\eps}} dt\\
&\ll \sum_{j\gg \log R}2^{j(2\beta-3+\eps)}=\sum_{j\gg \log R} 2^{j(\beta-3/2)}\to 0
\end{align}
as $R \to \infty$ by our choice of $\eps.$ \qed

\parindent=0pt
\small

\bibliographystyle{plain}
\bibliography{bibliography}

\footnotesize
\parindent=0pt

\textsc{Daniel El-Baz, School of Mathematics, University of Bristol, Bristol BS8~1TW, U.K.} \texttt{daniel.el-baz@brisol.ac.uk}
\smallskip

\textsc{Jens Marklof, School of Mathematics, University of Bristol, Bristol BS8~1TW, U.K.} \texttt{j.marklof@bristol.ac.uk}
\smallskip

\textsc{Ilya Vinogradov, School of Mathematics, University of Bristol, Bristol BS8~1TW, U.K.} \texttt{ilya.vinogradov@bristol.ac.uk}

\end{document}